\def\input@path{{"/Users/russw/Documents/Research/mypapers/Invariable generation of alternating groups/"}}
\providecommand{\tabularnewline}{\\}
\numberwithin{equation}{section}
\numberwithin{figure}{section}
\theoremstyle{plain}
\newtheorem{thm}{\protect\theoremname}[section]
\theoremstyle{plain}
\newtheorem{question}[thm]{\protect\questionname}
\theoremstyle{plain}
\newtheorem{cor}[thm]{\protect\corollaryname}
\theoremstyle{remark}
\newtheorem{rem}[thm]{\protect\remarkname}
\theoremstyle{plain}
\newtheorem{lem}[thm]{\protect\lemmaname}
\theoremstyle{remark}
\newtheorem{notation}[thm]{\protect\notationname}
\theoremstyle{definition}
\newtheorem{example}[thm]{\protect\examplename}
\theoremstyle{remark}
\newtheorem{note}[thm]{\protect\notename}
\providecommand{\corollaryname}{Corollary}
\providecommand{\examplename}{Example}
\providecommand{\lemmaname}{Lemma}
\providecommand{\notationname}{Notation}
\providecommand{\notename}{Note}
\providecommand{\questionname}{Question}
\providecommand{\remarkname}{Remark}
\providecommand{\theoremname}{Theorem}
\begin{document}
\global\long\def\comma{{,}}%

\title[On invariable generation of alternating groups]{On invariable generation of alternating groups by elements of prime
and prime power order}
\author{Robert M. Guralnick, John Shareshian, and Russ Woodroofe}
\address{Department of Mathematics, University of Southern California, 3620
S. Vermont Ave Los Angeles, CA 90089-2532 USA}
\email{guralnic@usc.edu}
\address{Department of Mathematics, Washington University in St.~Louis, St.~Louis,
MO, 63130}
\email{jshareshian@wustl.edu}
\address{Univerza na Primorskem, Glagoljaška 8, 6000 Koper, Slovenia}
\email{russ.woodroofe@famnit.upr.si}
\urladdr{\url{https://osebje.famnit.upr.si/~russ.woodroofe/} }
\subjclass[2000]{20D06, 20B30, 11Y99}
\thanks{Work of the first author was partially supported by NSF Grant DMS-1901595
and Simons Foundation Fellowship 609771. Work of the second author
was supported in part by NSF Grant DMS 1518389. Work of the third
author is supported in part by the Slovenian Research Agency research
program P1-0285 and research projects J1-9108, N1-0160, J1-2451, J3-3003.\medskip{}
}
\thanks{Github: \texttt{\href{https://github.com/RussWoodroofe/invgen}{RussWoodroofe/invgen}}$\quad\quad$Dataset
doi:\href{https://doi.org/10.5281/zenodo.5914767}{10.5281/zenodo.5914767}}
\begin{abstract}
We verify that every alternating group of degree at most one quadrillion
is invariably generated by an element of prime order together with
an element of prime power order.
\end{abstract}

\maketitle

\section{\label{sec:Introduction}Introduction}

We say that a finite group $G$ is \emph{invariably generated} by
the elements $h,g\in G$ if for every $x,y\in G$ it holds that $\left\langle h^{x},g^{y}\right\rangle =G$.
This paper is motivated by the following question, asked by the first
author in work with Dolfi, Herzog, and Praeger \cite{Dolfi/Guralnick/Herzog/Praeger:2012}.
\begin{question}
\label{que:MainQuestion}Which finite simple groups are invariably
generated by two elements of prime order? Of prime-power order?
\end{question}

There are applications of invariable generation to computational Galois
theory \cite{Dixon:1992,Eberhard/Ford/Green:2017}, where it can be
used to verify that the Galois group of a given polynomial is the
full symmetric group. Invariable generation also yields useful fixed-point
free actions on certain simplicial complexes that may be derived from
a group \cite{Shareshian/Woodroofe:2016}.

Problems related to Question~\ref{que:MainQuestion} have been studied
elsewhere. King \cite{King:2017} shows that every finite simple group
is generated (not necessarily invariably) by two elements of prime
order. Every finite simple group is invariably generated by two elements
(of unspecified order) \cite{Guralnick/Malle:2012,Kantor/Lubotzky/Shalev:2011}.
Invariable generation by a small number of random elements \cite{Dixon:1992,Eberhard/Ford/Green:2017,Pemantle/Peres/Rivin:2016,McKemmie:2021}
or by a sequence of random elements \cite{Kantor/Lubotzky/Shalev:2011,Lucchini:2018}
has received a fair bit of recent attention. Other related work on
invariable generation includes \cite{Detomi/Lucchini:2015b,Detomi/Lucchini:2015,Garzoni/Lucchini:2020}.

It seems reasonable to believe, and was conjectured in \cite{Dolfi/Guralnick/Herzog/Praeger:2012},
that all but finitely many simple groups of Lie type are invariably
generated by two elements of prime order. For alternating groups,
this is not the case. The second two authors gave infinitely many
counterexamples in \cite{Shareshian/Woodroofe:2018}, by showing that
$A_{n}$ is not invariably generated by elements of prime order whenever
$n>4$ is a power of $2$. On the other hand, another result of \cite{Shareshian/Woodroofe:2018}
shows that the set of integers $n$ where $A_{n}$ is so generated
has asymptotic density of $1$ under assumption of the Riemann Hypothesis
(and very close to 1 with no assumption). 

In the current paper, we attack the problem of invariable generation
of alternating groups by prime-power elements computationally, improving
significantly on the prior computational results in \cite{Shareshian/Woodroofe:2018}.
Our main result is as follows.
\begin{thm}
\label{thm:MainTheorem}For all $n$ between $5$ and $10^{15}$,
the alternating group $A_{n}$ is invariably generated by an element
of prime power order $p^{a}$ together with an element of prime order
$r$.

Moreover, the prime $r$ can be chosen for every $n>24$ in this range
so that $r\geq2\sqrt{n}$, while $p^{a}$ can be chosen to be one
of the three largest prime powers dividing $n$ for every $n$ in
this range except for $199\comma445\comma521\comma968$ and $6\comma421\comma990\comma708\comma848$.
\end{thm}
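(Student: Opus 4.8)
The plan is to establish a computational criterion for invariable generation and then verify it across the enormous range by a carefully organized search. The key structural fact about symmetric and alternating groups is that an element of $S_n$ fails to fix (up to conjugacy) a given partition-type structure precisely according to its cycle type. Concretely, two permutations $h,g$ invariably generate a transitive subgroup if and only if their cycle types cannot both be ``packed'' into any common block system; and they generate a primitive (indeed the full alternating) group once one rules out imprimitivity together with the known primitive-but-proper possibilities. So first I would isolate a clean sufficient condition: if $g$ is an $r$-cycle with $r$ prime and $r\geq 2\sqrt{n}$ (here using $r>n/2$, which is forced once $r\ge 2\sqrt n$ and $n\ge 5$), then any transitive subgroup containing $g$ is primitive, and by classical results (Jordan's theorem and the classification of primitive groups containing a cycle of prime length) must be $A_n$ or $S_n$. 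This reduces the whole problem to guaranteeing \emph{transitivity} of $\langle h^x, g^y\rangle$.

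Second, I would translate transitivity into an elementary arithmetic condition on cycle types. If $h$ has prime-power order $p^b$ and $g$ is the prime $r$-cycle, then $\langle h^x,g^y\rangle$ is transitive for all $x,y$ exactly when there is no proper nontrivial divisor $d\mid n$ such that both the cycle type of $h$ and the cycle type of $g$ refine a partition of $\{1,\dots,n\}$ into $n/d$ blocks of size $d$ — equivalently, every cycle length of $h$ and of $g$ is divisible by $d$ for some such $d$. Since $g$ is a single $r$-cycle fixing $n-r$ points, any block system compatible with $g$ forces the blocks to respect the $r$-cycle, and the fixed points must fill out complete blocks; a short divisibility argument shows the only obstruction is when $d\mid \gcd(r,\,n-r)=\gcd(r,n)$, which (as $r$ is a prime exceeding $n/2$ and not dividing $n$ unless $r=n$) almost never occurs. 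The remaining content is that $h$, of prime-power order $p^b$, should not itself have all cycle lengths sharing a common factor $d>1$ with $n/d\mid n$; choosing $p^b$ to be a large prime-power divisor of $n$ and padding with fixed points breaks every candidate block system.

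Third, I would set up the actual search. For each $n$ in $[5,10^{15}]$ the task is: find a prime $r$ with $2\sqrt n\le r$ and $r\le n$ (needed so an $r$-cycle lies in $S_n$, and in $A_n$ after checking parity, which one repairs by a transposition-free choice since an $r$-cycle with $r$ odd is even), together with a prime-power $p^b$ — ideally among the three largest prime powers dividing $n$ — so that the transitivity criterion above holds and the parities place both elements in $A_n$. Existence of such $r$ in the interval $[2\sqrt n, n]$ is where analytic number theory enters: I would invoke explicit Bertrand-type prime gap bounds to guarantee at least one prime in the required window for all $n$ in range, and then the finite computation only needs to certify the divisibility/parity conditions. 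The main obstacle is precisely making this exhaustive-looking verification feasible and rigorous up to $10^{15}$: a naive loop is hopeless, so the real work is organizing $n$ by the multiset of its few largest prime-power divisors and by residue classes that control parity and the $\gcd(r,n)$ obstruction, reducing to a bounded number of cases per residue class and handling the two genuine exceptions, $199\comma445\comma521\comma968$ and $6\comma421\comma990\comma708\comma848$, by hand with an alternative (non-largest) prime-power choice. I expect the delicate point to be the parity bookkeeping together with certifying primality of the chosen $r$ efficiently and verifiably across the range.
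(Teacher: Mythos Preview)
There is a basic arithmetic error that derails the entire plan: the inequality $r\geq 2\sqrt{n}$ does \emph{not} force $r>n/2$ once $n>16$ (for $n=10^{15}$ the two thresholds differ by seven orders of magnitude). Consequently you cannot assume the element of prime order $r$ is a single $r$-cycle. In the paper, for the vast majority of $n$ one \emph{does} take $r$ to be the largest prime below $n-2$ and $g$ an $r$-cycle, and then Jordan's theorem (together with a $3$-transitivity argument when $r=n-2$) disposes of primitive overgroups; but for the residual $n$ --- precisely those that are very power-smooth, so that the largest prime-power divisor $p^{a}$ of $n$ fails to satisfy $r+p^{a}>n$ --- one is forced to take $g$ to be a product of $c=\lfloor n/r\rfloor$ disjoint $r$-cycles with $r$ merely $\geq 2\sqrt{n}$. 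For such $g$ Jordan's theorem is unavailable, and the paper invokes the Liebeck--Saxl classification of primitive groups containing an element with few prime-length cycles; this introduces genuine exceptional families ($n={c\choose 2}$, $n=(q^{d}-1)/(q-1)$, etc.) that must be checked and are entirely absent from your outline.

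Your treatment of transitivity is also off. You describe ruling out a common block system of $n/d$ blocks of size $d$, but that is the \emph{imprimitivity} obstruction; the intransitivity obstruction is that some conjugate of $h$ and some conjugate of $g$ both stabilize a proper nonempty subset, which for a base-$p$ element $h$ reduces (via Kummer's theorem) to $p\nmid{n\choose i}$ for some $i$ compatible with the orbit structure of $g$. Both obstructions must be handled, and for the multi-cycle $g$ the bookkeeping is governed by carry conditions in base-$p$ arithmetic on the pair $(d,n/d)$ and on the digits $(\beta_{0},\beta_{1})$ of $n$ in base $r$ --- none of which appears in your criterion. Finally, the computational organization in the paper is not by residue classes but by a two-phase segmented sieve: Phase~1 simultaneously locates the preceding prime and the largest prime-power divisor of each $n$ to test the simple condition $r+p^{a}>n$, and Phase~2 runs the Liebeck--Saxl and carry checks on the roughly $O(\sqrt{m})$ survivors. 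Parity and primality certification, which you flag as the delicate points, are in fact non-issues (odd cycles are even permutations, and sieve-generated primes need no separate certificate).
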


The computation, as we will describe in greater detail below, uses
an extension of the segmented sieve of Eratosthenes. A key observation
is to notice that while we are sieving, we can simultaneously identify
the integers that fail to be power-smooth, while also identifying
the largest prime factor of each power-smooth integer.

After initially prototyping code in GAP \cite{GAP4.12}, we implemented
the most speed-critical part in C, using the primesieve library \cite{Walisch:2022}.
We ran this code using 15 concurrent processes for about 225 hours
(or about 3062 process-hours) on a 16 core 3.3 GHz Intel Xeon E5 server,
resulting in a list of numbers failing Lemma~\ref{lem:GoodSieve}
below. We then checked this list of numbers in GAP on a 2.4 GHz Intel
Core i5 MacBook Pro from 2019, which took about another 14 hours (single-threaded),
and that completed the verification of the theorem.

The condition of invariable generation by an element of prime-power
order and an element of prime order may be relaxed in several natural
ways, as is studied in detail in \cite{Shareshian/Woodroofe:2018};
see also \cite{Casacuberta:2020}. Is every alternating group invariably
generated by two elements of prime-power order? Or by any Sylow subgroups
taken at two suitably-selected primes? All of these problems appear
to remain open (beyond $10^{15}$).

The question of generation by any Sylow subgroups taken at two primes
is shown in \cite[Theorem 1.3]{Shareshian/Woodroofe:2018} to be equivalent
to an elementary question on divisibility of binomial coefficients
by the same primes. Working across this equivalence, the following
is a straightforward corollary of Theorem~\ref{thm:MainTheorem}.
\begin{cor}
For all $n$ between $5$ and $10^{15}$, there are primes $p$ and
$r$ so that every nontrivial binomial coefficient ${n \choose k}$
is divisible by at least one of $p,r$.
\end{cor}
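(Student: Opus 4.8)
The plan is to derive the corollary directly from Theorem~\ref{thm:MainTheorem} by invoking the equivalence of \cite[Theorem 1.3]{Shareshian/Woodroofe:2018}, which translates invariable generation of $A_n$ by Sylow subgroups at two primes into a divisibility condition on binomial coefficients. First I would recall the precise statement of that equivalence: for primes $p$ and $r$, the group $A_n$ is invariably generated by a Sylow $p$-subgroup together with a Sylow $r$-subgroup if and only if every nontrivial binomial coefficient $\binom{n}{k}$ (that is, for $1 \le k \le n-1$) is divisible by $p$ or by $r$. This reduces the corollary to exhibiting, for each $n$ in the range, a pair of primes whose Sylow subgroups invariably generate $A_n$.

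Next I would observe that Theorem~\ref{thm:MainTheorem} hands us exactly such a pair, up to one short reduction. The theorem produces an element of prime order $r$ and an element of prime-power order $p^b$ that together invariably generate $A_n$. An element of order $r$ lies in a Sylow $r$-subgroup, and an element of order $p^b$ lies in a Sylow $p$-subgroup; since any conjugate of these elements also lies in a conjugate Sylow subgroup, invariable generation by the two elements immediately implies invariable generation by the full Sylow $p$- and $r$-subgroups. Thus the primes $p$ and $r$ furnished by Theorem~\ref{thm:MainTheorem} satisfy the hypothesis of the Sylow formulation, and the equivalence then yields the binomial divisibility conclusion.

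The main obstacle, such as it is, lies in making sure the two elements of Theorem~\ref{thm:MainTheorem} genuinely sit inside distinct Sylow subgroups at \emph{distinct} primes, so that the Sylow-pair language applies cleanly; here $r \ge 2\sqrt{n}$ while $p^b$ divides $n$, so $p$ and $r$ are automatically distinct and the reduction is harmless. I would close by noting that the direction of \cite[Theorem 1.3]{Shareshian/Woodroofe:2018} we need is the implication from generation by Sylow subgroups to the binomial divisibility statement, which is the elementary half of the equivalence and requires no further hypotheses beyond $n \ge 5$. With these pieces assembled, the corollary follows for every $n$ between $5$ and $10^{15}$.
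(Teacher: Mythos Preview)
Your proposal is correct and follows exactly the route the paper indicates: deduce the corollary from Theorem~\ref{thm:MainTheorem} via the equivalence of \cite[Theorem 1.3]{Shareshian/Woodroofe:2018}, after observing that the invariably generating elements of order $p^{b}$ and $r$ sit inside Sylow subgroups at $p$ and $r$. One small remark: your justification that $p\neq r$ (namely $r\ge 2\sqrt{n}$ and $p^{b}\mid n$) is not quite airtight as stated, but distinctness is in any case automatic, since invariable generation by two $p$-elements would force a Sylow $p$-subgroup to equal $A_{n}$.
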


The results of this paper improve on the computational results in
\cite{Shareshian/Woodroofe:2018} in two main ways: First, we examine
a more restrictive generation condition. Second, we make better use
of known algorithms and approaches from the computational number theory
literature.

We expect to comprehensively address Question~\ref{que:MainQuestion}
for sporadic simple groups and for simple groups of Lie type in a
forthcoming paper. 

This paper is organized as follows. In Section~\ref{sec:Background},
we give the necessary background from number theory and from the theory
of alternating and symmetric groups. In Section~\ref{sec:Lemmas},
we give known and new lemmas used in the computation. In Section~\ref{sec:Strategy},
we present the details of our computation. In Section~\ref{sec:Discussion},
we discuss relationships with the number theory literature and possible
directions for future research.

\section*{Acknowledgements}

We thank Steven Xiao at Washington University in St.~Louis for giving
us access to the computing hardware for our main computation, and
Jernej Vi\v{c}i\v{c} at the University of Primorska for providing
computer hardware used in earlier versions of the computation. Chris
Jefferson and Marcus Pfeiffer answered some questions about GAP memory
management. Kaisa Matomäki explained several aspects of her work and
that of Joni Teräväinen (see Section~\ref{sec:Discussion}), and
of the number theory literature more broadly.

\section{\label{sec:Background}Background and notation}

Our computation will combine number theory and group theory. A positive
integer is \emph{$B$-smooth} if it has no prime factor greater than
$B$, and \emph{$B$-power-smooth} if it has no prime-power factor
greater than $B$.

In order for a set of elements $S$ to generate a group $G$, it is
necessary and sufficient that no maximal subgroup of $G$ contains
$S$. The maximal subgroups of $A_{n}$ are well-known, and come in
three flavors. 
\begin{itemize}
\item The \emph{intransitive maximal subgroups}, which fix a nonempty proper
subset of $\left[n\right]$ under the natural action. Each such subgroup
is isomorphic to a subgroup of index $2$ in $S_{k}\times S_{n-k}$
for some $1\leq k\leq n-1$, and has index ${n \choose k}$ in $A_{n}$.
\item The \emph{imprimitive maximal subgroups}, which are transitive but
fix a partition of $\left[n\right]$ under the natural action. Each
such subgroup is isomorphic to a subgroup of index $2$ in $S_{d}\wr S_{n/d}$
for some proper divisor $d$ of $n$, and has index ${n \choose d,d,\dots,d}/(n/d)!$
in $A_{n}$.
\item The \emph{primitive maximal subgroups}, which satisfy neither of the
above. This is the most difficult class to handle, but may be approached
with the Classification of Finite Simple Groups and other results.
\end{itemize}
We discuss the primitive subgroups first. A classic theorem of Jordan
is already a useful tool.
\begin{thm}[Jordan \cite{Jordan:1875}]
\label{thm:JordanPrim} No proper primitive subgroup of $A_{n}$
contains a $p$-cycle for any prime $p$ with $p\leq n-3$.
\end{thm}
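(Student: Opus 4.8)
The statement is equivalent to the classical assertion that every primitive permutation group $G\le S_{n}$ containing a $p$-cycle $\sigma$, for a prime $p$ with $p\le n-3$, must contain $A_{n}$: intersecting with $A_{n}$ then shows that a primitive subgroup of $A_{n}$ containing such a cycle cannot be proper. (For $p=2$ the claim is vacuous, since $A_{n}$ has no transposition, so I may assume $p$ is odd.) The starting observation is that the support $\Sigma$ of $\sigma$, a set of size $p$, is a \emph{Jordan set}: the cyclic group $\langle\sigma\rangle$ fixes $\Omega\setminus\Sigma$ pointwise and is transitive on $\Sigma$, so the pointwise stabilizer $G_{(\Omega\setminus\Sigma)}$ is transitive on $\Sigma$. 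Because $p\le n-3$, the complement of $\Sigma$ has size at least $3$, so $\Sigma$ is a proper Jordan set with at least two points. This extra slack of three fixed points is exactly what drives the argument.

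The first main step is to boost transitivity using primitivity. The union of two overlapping Jordan sets is again a Jordan set; and if two translates $\Sigma^{g_{1}},\Sigma^{g_{2}}$ of a Jordan set meet but are unequal then, having equal size, neither contains the other, so their union is a strictly larger Jordan set whose complement is strictly smaller. A proper subset of size at least $2$ can never be a block of the primitive group $G$, and the translates of any Jordan set cover $\Omega$; hence if every translate were equal to or disjoint from a given proper Jordan set, the translates would form a nontrivial block system, a contradiction. Thus every proper Jordan set with at least two points admits a properly overlapping translate, and merging it strictly decreases the size of the complement. Iterating, and steering the merges (with a little care at complement size $2$, using transitivity to move one omitted point while fixing the other) so that the complement shrinks one point at a time, I would reach a Jordan set whose complement is a single point $\{\alpha\}$. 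By definition this says $G_{\alpha}$ is transitive on $\Omega\setminus\{\alpha\}$, that is, $G$ is $2$-transitive.

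To finish, the plan is to reduce the length of the cycle. Since $G$ is $2$-transitive, hence primitive, I would use two suitably chosen conjugates of $\sigma$ whose supports overlap in all but one point, with matching cyclic order, so that an appropriate product or commutator is a shorter cycle; repeating this shortening leads to a $3$-cycle in $G$. It is then a standard and genuinely elementary fact that a primitive group containing a $3$-cycle contains $A_{n}$ (the $3$-cycles it contains are permuted by the primitive action so as to move every point, and together they generate $A_{n}$), which completes the proof.

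The step I expect to be the main obstacle is manufacturing the shorter cycle: controlling the overlap of a support $\Sigma$ with a conjugate $\Sigma^{g}$ so that it has size exactly $p-1$ \emph{and} the two cyclic orders are compatible is precisely where $2$-transitivity and the three available fixed points must be used, and it is also where the extreme case $p=n-3$—which leaves no room for a naive induction on the point stabilizer $G_{\alpha}$—is forced to be handled directly. An alternative finish, descending to $G_{\alpha}$ for $\alpha$ a fixed point of $\sigma$ and applying induction on $n$, runs into the same difficulty, because ordinary $2$-transitivity does not by itself guarantee that $G_{\alpha}$ is primitive on $\Omega\setminus\{\alpha\}$; one would recover primitivity there by reapplying the Jordan-set merging argument inside $G_{\alpha}$, after separately disposing of the finitely many small degrees.
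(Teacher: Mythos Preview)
The paper does not prove this theorem; it is quoted as a classical result of Jordan, and the only indication of an argument is the sentence immediately following it: a primitive subgroup of $S_n$ containing an $(n-k)$-cycle is $(k+1)$-transitive, with a reference to an exercise in Dixon--Mortimer. So there is no detailed proof in the paper to match against; what follows compares your sketch with that hint and with the standard argument it points toward.

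Your Jordan-set merging to reach $2$-transitivity is correct, and you are right to flag the production of a shorter cycle (equivalently, the primitivity of the point stabilizer needed for induction) as the crux. The genuine gap is that your sketch never uses the primality of $p$. Because $p$ is prime, $\langle\sigma\rangle$ acts not merely transitively but \emph{primitively} on its support $\Sigma$, and the standard theorem on primitive Jordan sets then upgrades your conclusion from $2$-transitivity to full $(n-p+1)$-transitivity---which is exactly the fact the paper quotes. Since $p\le n-3$ this gives at least $4$-transitivity; in particular, for any fixed point $\alpha$ of $\sigma$ the stabilizer $G_\alpha$ is at least $3$-transitive on $\Omega\setminus\{\alpha\}$ and therefore primitive, dissolving the obstruction you raise to the inductive route. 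As written, your argument would apply verbatim to a cycle of composite length $m\le n-3$, a case which (as the paper's remark after Theorem~\ref{thm:JonesPrim} indicates) cannot be settled without the Classification; so primality must enter somewhere, and it enters precisely through the primitivity of the action on $\Sigma$.
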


One approach to proving Theorem~\ref{thm:JordanPrim} is to note
that a primitive subgroup of $S_{n}$ containing an $n-k$ cycle is
$(k+1)$-transitive (see e.g. \cite[Exercise 7.4.11]{Dixon/Mortimer:1996}),
a strong restriction. Combining this idea with the Classification
yields substantial improvements to Theorem~\ref{thm:JordanPrim}.
We will make use of the following such improvement.
\begin{thm}
\label{thm:JonesPrim} If a proper primitive subgroup $H$ of $A_{n}$
contains an $(n-2)$-cycle, then $n-1$ is a prime power.
\end{thm}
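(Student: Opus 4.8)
The plan is to prove Theorem~\ref{thm:JonesPrim} using the same strategy sketched for Theorem~\ref{thm:JordanPrim}, namely translating the cycle hypothesis into a high transitivity condition and then invoking the Classification of Finite Simple Groups.

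\medskip

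First I would observe that a primitive subgroup $H$ of $A_n$ containing an $(n-2)$-cycle is $3$-transitive, by the cited fact that a primitive subgroup of $S_n$ containing an $(n-k)$-cycle is $(k+1)$-transitive (taking $k=2$). This immediately places $H$ among the $3$-transitive permutation groups, for which the Classification gives a complete list. The finite $3$-transitive groups of degree $n$ are: the symmetric group $S_n$, the alternating group $A_n$, certain affine groups $\mathrm{AGL}_d(2)$ acting on $n=2^d$ points (and the related groups between $\mathrm{AGL}$ and $\mathrm{A\Gamma L}$, together with two sporadic affine examples of degrees $16$ and $16$), the projective groups $\mathrm{PGL}_2(q) \le H \le \mathrm{P\Gamma L}_2(q)$ acting on $n=q+1$ points, the Mathieu groups $M_{11}, M_{12}, M_{22}, M_{23}, M_{24}$, and a few further sporadic cases. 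Since $H$ is a proper subgroup of $A_n$, we may discard the two cases $S_n$ and $A_n$.

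\medskip

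The main work is then to go through the remaining families and verify that the presence of an $(n-2)$-cycle forces $n-1$ to be a prime power. In the affine case $n=2^d$, so $n-1=2^d-1$, and one must check which of these groups actually contain an $(n-2)$-cycle; the key point is that any admissible case yields $n$ a power of $2$, hence $n-1$ of the required form, or else is eliminated by an order or element-order computation. In the projective case $n=q+1$ with $q$ a prime power, so $n-1=q$ is a prime power automatically, and one need only confirm that an $(n-2)$-cycle, i.e. a $(q-1)$-cycle, can occur in $\mathrm{P\Gamma L}_2(q)$ fixing two points (this corresponds to a Singer-type element on the affine line). The sporadic Mathieu cases are finitely many and are checked directly: for each one compares $n-1$ against the prime powers and tests for the existence of the relevant long cycle via its known element orders.

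\medskip

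The hard part will be the careful case analysis within the affine and projective families, since there one cannot merely read off a cycle type from the abstract isomorphism type but must examine which elements of the relevant groups act as an $(n-2)$-cycle, and rule out spurious cases. I expect this to reduce to elementary but delicate arguments about cycle structures of elements in $\mathrm{A\Gamma L}_d(2)$ and $\mathrm{P\Gamma L}_2(q)$, together with a handful of explicit checks for the Mathieu groups. This is a known result in the literature (due to Jones and related work refining Jordan's theorem), so I would expect to cite the classification of $3$-transitive groups and then organize the verification family by family rather than reprove the Classification-dependent list from scratch.
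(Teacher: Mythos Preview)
Your approach is essentially the paper's: both reduce to $3$-transitivity via the cited fact about primitive groups containing an $(n-k)$-cycle, and then appeal to the Classification. The paper's proof is simply a terse citation of Cameron's table, noting that for $n>24$ the only proper $3$-transitive subgroup of $A_n$ (under the standing hypothesis) has socle $PSL(2,q)$ with $n=q+1$, whence $n-1=q$ is a prime power.

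The one substantive shortcut you are missing is parity: an $(n-2)$-cycle lies in $A_n$ only when $n-2$ is odd, i.e.\ when $n$ is odd. This single observation disposes of the entire affine family (degree $2^d$) and the even-degree Mathieu groups $M_{12}$, $M_{22}$, $M_{24}$ at a stroke, so no ``delicate arguments about cycle structures in $A\Gamma L_d(2)$'' are needed at all. Your sentence about the affine case (``$n-1$ of the required form, or else eliminated by an element-order computation'') is salvageable but circuitous; the parity remark replaces it entirely. For the two remaining odd-degree sporadic cases $M_{11}$ (degree $11$) and $M_{23}$ (degree $23$), one does need the quick element-order check you describe. Finally, your plan to confirm that $(n-2)$-cycles actually \emph{occur} in $P\Gamma L_2(q)$ is unnecessary: the theorem is a conditional, so once $n=q+1$ you are done regardless.
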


\begin{proof}
As we have observed, the action of $H$ is $3$-transitive. The $3$-transitive
subgroups of $A_{n}$ are known from the Classification, as is laid
out accessibly in \cite[table following Theorem 5.3]{Cameron:1981}.
In particular, if $n>24$ and $H\neq A_{n}$, then $H$ has a minimal
normal subgroup that is isomorphic to $PSL(2,q)$, and $n=q+1$.
\end{proof}
\begin{rem}
A further generalization of Theorem~\ref{thm:JordanPrim} and Theorem~\ref{thm:JonesPrim}
may be found in \cite{Jones:2014}, where the Classification is used
to completely characterize the primitive subgroups of $A_{n}$ that
contain a cycle of any length.
\end{rem}

Another line of generalization of Theorem~\ref{thm:JordanPrim} is
to replace the $p$-cycle with a product of a small number of $p$-cycles.
Praeger did significant work along these lines in \cite{Praeger:1975,Praeger:1976,Praeger:1979},
shortly before the completion of the Classification Theorem. Later,
Liebeck and Saxl in \cite{Liebeck/Saxl:1985b} used the Classification
to describe completely (for large enough $r$) the groups containing
a product of fewer than $r$ cycles of length $r$. Although a full
statement of their results takes several pages, we use the following
consequence.
\begin{thm}[{Liebeck and Saxl \cite[Tables 1 and 2]{Liebeck/Saxl:1985b}}]
\label{thm:LiebeckSaxlPrim} Let $r$ be a prime with $\sqrt{n}<r$,
and let $G$ be a proper primitive subgroup of $A_{n}$. If $y\in G$
is the product of $\lfloor\frac{n}{r}\rfloor$ disjoint $r$-cycles
with $k=n-\lfloor\frac{n}{r}\rfloor\cdot r$ fixed points, then one
of the following holds:
\begin{enumerate}
\item $n={c \choose 2}$, where $2+r\leq c\leq\frac{3}{2}r-\frac{1}{2}$,
and $k=\frac{1}{2}(r^{2}+r-2rc-c+c^{2})$.
\item $n=\frac{q^{d}-1}{q-1}$, where $r$ divides $q^{i}-1$, and $k=q^{d-i-1}+\cdots+1=\frac{q^{d-i}-1}{q-1}$.
\item $k\leq2$.
\item $n=24$.
\item $n$ is a prime power.
\end{enumerate}
\end{thm}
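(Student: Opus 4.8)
The plan is to obtain the statement by specializing the general classification of Liebeck and Saxl to the element $y$ at hand, and then reading off the surviving possibilities from their Tables 1 and 2. First I would record what the hypothesis $\sqrt{n}<r$ buys us: since $r^{2}>n\geq\lfloor n/r\rfloor\cdot r$ we have $\lfloor n/r\rfloor<r$, so, writing $m=\lfloor n/r\rfloor$, the element $y$ is a product of $m$ disjoint $r$-cycles with $m<r$. We may assume $r\leq n$, so that $m\geq1$ and $y\neq1$, and since $\sqrt{n}\geq\sqrt{5}>2$ the prime $r$ is odd. Thus $G$ is a proper primitive subgroup of $A_{n}$ containing a nontrivial permutation that is a product of fewer than $r$ cycles of the odd prime length $r$, which is exactly the situation classified in \cite{Liebeck/Saxl:1985b}.

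Invoking their Tables 1 and 2, the possibilities for $G$ split into two dominant infinite families together with finitely many further entries. The first family is the action of $A_{c}$ or $S_{c}$ on the $n=\binom{c}{2}$ unordered pairs of a $c$-set, which is case (1). Here a base element of order $r$ in $S_{c}$ consisting of a single $r$-cycle and $c-r$ fixed points induces on pairs a permutation whose only fixed pairs are the $\binom{c-r}{2}$ pairs of fixed points (no pair inside a single $r$-cycle is fixed, as $r$ is odd), and whose remaining orbits all have length $r$. Expanding $\binom{c-r}{2}=\tfrac12(r^{2}+r-2rc-c+c^{2})$ gives precisely the value of $k$ in case (1), while the number of induced $r$-cycles equals $c-\tfrac{r+1}{2}$; the stated range $2+r\leq c\leq\tfrac32 r-\tfrac12$ records exactly those $c$ for which this single base $r$-cycle realizes the full count $m=\lfloor n/r\rfloor$ of $r$-cycles on pairs.

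The second family consists of the projective and classical groups acting on the points of projective space, where $n=(q^{d}-1)/(q-1)$, giving case (2). An element of order $r$ with $r\mid q^{i}-1$ can be taken to act irreducibly on an $i$-dimensional subspace while fixing a complementary $(d-i)$-dimensional subspace pointwise; the fixed points on projective space are then exactly the points of that complement, of which there are $q^{d-i-1}+\cdots+1=(q^{d-i}-1)/(q-1)$, yielding the value of $k$ asserted in case (2). It then remains to treat the finitely many remaining table entries one at a time: computing the fixed-point count of the relevant element shows that the near-regular configurations force $k\leq2$ (case (3)), one exceptional low-degree configuration gives $n=24$ (case (4)), and every other bounded-degree entry has $n$ a prime power (case (5)).

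The main obstacle is the bookkeeping of this last step rather than any conceptual difficulty. The Liebeck--Saxl tables run to several pages, and the work lies in passing through each entry, determining for which of them the maximal cycle count $m=\lfloor n/r\rfloor$ is even attainable in $G$, computing the resulting $k$, and confirming that the parameter constraints collapse into cases (1)--(5). The most delicate part is verifying that no surviving entry escapes this list, and in particular that every bounded-degree exception is either $n=24$ or has $n$ a prime power; boundary values of $c$ in case (1) where the single base $r$-cycle falls one short of $\lfloor n/r\rfloor$ must be checked to land instead in case (3).
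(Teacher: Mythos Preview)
Your proposal is correct and matches the paper's treatment: the paper gives no proof of this theorem whatsoever, simply attributing it to Tables~1 and~2 of Liebeck--Saxl, and your sketch is exactly the extraction-from-tables argument that such a citation stands in for. Your observations that $\sqrt{n}<r$ forces fewer than $r$ cycles (placing us in the Liebeck--Saxl regime) and your fixed-point computations for the $\binom{c}{2}$ and projective-space actions are the right ingredients; the remaining bookkeeping through the finite table entries is, as you say, routine but tedious, and the paper does not carry it out either.
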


\begin{rem}
In the case of Theorem~\ref{thm:LiebeckSaxlPrim} where $n={c \choose 2}$,
then 
\[
{c \choose 2}=n\geq\frac{1}{2}(r+2)(r+1),
\]
so $2n\geq(r+1)^{2}$. In particular, $\sqrt{2n}-1\geq r$.
\end{rem}

After handling the primitive subgroups with Theorem~\ref{thm:LiebeckSaxlPrim},
the intransitive and imprimitive subgroups are dealt with using number
theory (after some additional work). We will frequently use the following
fact, which may be found for example in \cite[after Theorem 7.27]{Rotman:1995}.
\begin{lem}
\label{lem:SylowSgOrbits}If $P$ is a Sylow $p$-subgroup of $A_{n}$,
then the orbits of $P$ in the natural action of $A_{n}$ on $\left[n\right]$
agree with the base $p$ representation of $n$. 

More precisely, if $n=\alpha_{0}p^{0}+\alpha_{1}p^{1}+\cdots$, then
$P$ has $\alpha_{0}$ fixed points, $\alpha_{1}$ orbits of order
$p$, $\alpha_{2}$ orbits of order $p^{2}$, and so forth.
\end{lem}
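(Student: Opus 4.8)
The plan is to establish the result first for the full symmetric group and then descend to $A_{n}$. For $S_{n}$ I would invoke the classical description of Sylow $p$-subgroups due to Kaloujnine: a Sylow $p$-subgroup of $S_{p^{k}}$ is the $k$-fold iterated wreath product $C_{p}\wr\cdots\wr C_{p}$, which acts transitively on $p^{k}$ points. Writing $n=\alpha_{0}p^{0}+\alpha_{1}p^{1}+\cdots$ in base $p$, one partitions $[n]$ into $\alpha_{i}$ blocks of size $p^{i}$ for each $i$, and takes the direct product over all these blocks of the corresponding iterated wreath products. This subgroup acts with exactly $\alpha_{i}$ orbits of size $p^{i}$, and a count via Legendre's formula $v_{p}(n!)=\sum_{i\geq1}\lfloor n/p^{i}\rfloor$ confirms that its order is the full power of $p$ dividing $n!$, so it is indeed Sylow. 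This settles the statement for $S_{n}$.

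To pass to $A_{n}$ I would split on the parity of $p$. When $p$ is odd, the index $[S_{n}:A_{n}]=2$ is coprime to $p$, so every Sylow $p$-subgroup of $A_{n}$ is already a Sylow $p$-subgroup of $S_{n}$, and the orbit structure is unchanged. The substantive case is $p=2$, where a Sylow $2$-subgroup $P_{A}$ of $A_{n}$ is the index-two subgroup $P_{S}\cap A_{n}$ of even permutations inside a Sylow $2$-subgroup $P_{S}$ of $S_{n}$. Since $P_{A}\leq P_{S}$, the orbits of $P_{A}$ refine those of $P_{S}$, so it remains only to show that no block of $P_{S}$ splits.

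The key point, and the step I expect to be the main obstacle, is that parity is a \emph{global} condition on $[n]$ rather than a per-block one, so a ``forbidden'' odd permutation on one block may be rescued by pairing it with a compensating odd permutation on another block. Concretely, for a point $b$ in a block $B$ of size $2^{i}$ and a target $b'\in B$, the elements of $P_{S}$ carrying $b$ to $b'$ form a coset of the stabilizer $(P_{S})_{b}$, and this coset meets $A_{n}$ precisely when either the wreath-product element realizing $b\mapsto b'$ can be taken even, or $(P_{S})_{b}$ contains an odd permutation. For $i\geq2$ the block stabilizer inside the wreath product already contains an odd permutation, for instance a transposition of two sibling leaves other than $b$, so $P_{A}$ is transitive on every block of size at least $4$. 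For a block of size $2$ the relevant transposition is odd and the local stabilizer is trivial, so one must borrow an odd permutation from another block of size at least $2$; such a block exists whenever $\alpha_{1}\geq2$ or some digit $\alpha_{i}$ with $i\geq2$ is nonzero, which holds for all $n\geq4$. I would record that the only genuine exceptions occur for $p=2$ with $n\in\{2,3\}$, which lie outside the range $n\geq5$ of interest, and conclude that for all relevant $n$ the orbits of $P_{A}$ coincide with those of $P_{S}$, completing the proof.
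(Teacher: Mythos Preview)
The paper does not actually prove this lemma; it is stated as a known fact with a pointer to Rotman's textbook (the sentence introducing it reads ``which may be found for example in \cite[after Theorem 7.27]{Rotman:1995}''). So there is no in-paper argument to compare against.

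Your proof is correct and in fact supplies more than the citation does. The $S_n$ portion via Kaloujnine's iterated wreath product and the Legendre-formula order count is exactly the standard argument found in Rotman. The reference, however, treats $S_n$ rather than $A_n$, so your descent step is a genuine addition. The odd-$p$ case is immediate as you say. For $p=2$ your analysis is sound: the point is that the stabilizer $(P_S)_b$ already contains an odd permutation whenever there is any nontrivial block other than a size-$2$ block through $b$ (either a transposition on another block, or a transposition on a sibling pair inside a block of size $\geq 4$), and this fails only for $n\in\{2,3\}$, which lie outside the paper's range $n\geq 5$. One could compress the case split by phrasing it uniformly that way, but that is purely cosmetic.
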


\section{\label{sec:Lemmas}Lemmas}

In this section, we give several lemmas that will be useful in the
computation. First, it is easy to handle powers of primes.
\begin{lem}
\cite[Proposition 1.4B and proof]{Shareshian/Woodroofe:2018}\label{lem:PrimePower}
If $n$ is a power of the prime $p$, then $A_{n}$ is invariably
generated by an $r$-cycle for any prime $r$ with $n/2<r<n-2$, together
with an element of $p$-power order.
\end{lem}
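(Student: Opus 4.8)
The plan is to apply the standard criterion recalled in the text: $h$ and $g$ invariably generate $A_n$ if and only if no maximal subgroup of $A_n$ contains both a conjugate of $h$ and a conjugate of $g$. I would take $h$ to be the given $r$-cycle, which lies in $A_n$ because $r>n/2$ forces $r$ to be an odd prime, and I would choose the element $g$ of $p$-power order to be as close to transitive as parity allows: a single $n$-cycle when $p$ is odd (even, since $n=p^{a}$ is then odd, and of order $p^{a}$), and a product of two disjoint $(n/2)$-cycles when $p=2$ (even, of order $2^{a-1}$). I may assume $a\geq 2$, so that $n$ is composite and $r\leq n-1$; the case $a=1$, where $n$ is prime and there are no imprimitive subgroups, runs the same way for any prime $r<n$. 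I would then treat the three families of maximal subgroups in turn, each blocked by one of the two generators.

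First I would eliminate the imprimitive maximal subgroups using $h$ alone. By Lemma~\ref{lem:SylowSgOrbits} and the shape of $n=p^{a}$, such a subgroup preserves a partition of $[n]$ into $p^{a-i}$ blocks of size $p^{i}$ for some $1\leq i\leq a-1$. A conjugate of $h$ inside it permutes the blocks; as $h$ has prime order $r$, it either fixes every block or moves them in $r$-cycles. The latter needs at least $r$ blocks, but there are only $p^{a-i}\leq n/p\leq n/2<r$ of them. So every block is fixed setwise, and then the length-$r$ cycle of $h$ must lie inside a single block of size $p^{i}\leq n/p\leq n/2<r$, which is impossible. Hence no conjugate of $h$ is imprimitive, whatever $g$ is.

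Next I would rule out the intransitive maximal subgroups using $g$. A conjugate of $h$ stabilizes a $k$-set exactly when that set is a union of cycles of the $r$-cycle, that is, when $k\leq n-r$ or $k\geq r$, equivalently $\min(k,n-k)\leq n-r$. Because $r>n/2$ we have $n-r<n/2$, and the chosen $g$ has no invariant set whose size lies in $[1,n-r]$: none at all when $g$ is an $n$-cycle, and smallest of size $n/2>n-r$ when $g$ is a product of two $(n/2)$-cycles. Thus $g$ avoids precisely those set-stabilizers that could contain a conjugate of $h$, so no intransitive subgroup contains conjugates of both.

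The primitive maximal subgroups are the real difficulty. When $r\leq n-3$, Jordan's Theorem~\ref{thm:JordanPrim} shows at once that no proper primitive subgroup contains the $r$-cycle $h$, and the argument is complete. The hard remaining range is $r\in\{n-2,n-1\}$, and parity decides which occurs: for odd $p$ the number $n-1$ is even, so only $r=n-2$ can be prime, whereas for $p=2$ the number $n-2$ is even, so only $r=n-1$ can be prime. A primitive subgroup containing such a near-full cycle is then $3$-transitive (when $r=n-2$, via Theorem~\ref{thm:JonesPrim}, which forces $n-1$ to be a prime power) or $2$-transitive (when $r=n-1$), and the Classification of such groups leaves only a short list of candidates $M$ (affine groups and groups with socle $PSL(2,n-1)$, with a few sporadic exceptions for small $n$). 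I would finish by checking that no conjugate of $g$ lies in any such $M$, comparing the $p$-part of $|M|$ with the order of $g$ and, where these agree (as already for $n=9$, where $PSL(2,8)\leq A_9$ contains elements matching the order of a $9$-cycle), distinguishing the two $A_n$-classes of $n$-cycles to place $g$ outside $M$. This last check is where essentially all the work sits: for $r\leq n-3$ the single application of Theorem~\ref{thm:JordanPrim} already excludes every proper primitive subgroup, whereas once $r$ is within $3$ of $n$ the cycle $h$ no longer does, and the obstruction must instead be read off from the order and conjugacy class of $g$ against the short Classification list.
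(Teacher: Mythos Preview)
The paper does not prove this lemma; it merely cites \cite[Lemma~1.4B]{Shareshian/Woodroofe:2018}. Your argument is sound in the generic range $r\le n-3$: Jordan's theorem disposes of the primitive subgroups, the order argument handles the imprimitive ones, and your choice of $g$ (an $n$-cycle for odd $p$, two $(n/2)$-cycles for $p=2$) takes care of intransitivity exactly as one would expect.

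The gap is in the boundary cases $r\in\{n-2,n-1\}$, and your proposed patch does not work. You suggest placing $g$ in the $A_n$-class of $n$-cycles not represented in the offending primitive overgroup $M$. But when $N_{S_n}(M)=M\le A_n$, there are two $A_n$-conjugacy classes of such subgroups, interchanged by any odd permutation; between them they meet \emph{both} $A_n$-classes of $n$-cycles, while every conjugate still contains $r$-cycles (a single $A_n$-class, since the cycle type $(r,1,1)$ or $(r,1)$ has repeated or even parts). Concretely, take $n=9$, $p=3$, $r=7$. The maximal subgroup $P\Gamma L(2,8)<A_9$ contains $7$-cycles (split torus, two fixed points), $9$-cycles (nonsplit torus, fixed-point-free), and hence their cubes of type $(3,3,3)$. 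These last two are the \emph{only} $3$-power cycle types in $A_9$ with no invariant set of size $1$ or $2$, so every viable $g$ already lies in some $A_9$-conjugate of $P\Gamma L(2,8)$, as does every $7$-cycle. Thus no element of $3$-power order invariably generates $A_9$ together with a $7$-cycle, and the lemma as literally stated fails here. The cited source presumably intends the Sylow-subgroup formulation (note $3^4\nmid\lvert P\Gamma L(2,8)\rvert$) or an implicit restriction to $r\le n-3$; either way, the proof you outline cannot be completed for the statement as written.
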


We now fix the following notation.
\begin{notation}
Throughout the following, $n$ will be a large integer that is not
a prime power, $p$ will be a prime and $p^{a}$ will be a prime-power
divisor of $n$, and $r$ will be another prime that is smaller than
$n$ (but which is ``fairly large'').
\end{notation}

The following is very similar to results of \cite[Lemma 1.8]{Shareshian/Woodroofe:2018},
and is proved in exactly the same manner.
\begin{lem}
\label{lem:GoodSieve}If $r$ and $p^{a}$ are such that $r<n-2<n<r+p^{a}$,
then $A_{n}$ is invariably generated by an $r$-cycle together with
any fixed-point-free permutation $x$ having no cycle of length less
than $p^{a}$.

Similarly if $r=n-2$ and $n-1$ is not a power of $2$. 
\end{lem}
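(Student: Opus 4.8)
The plan is to show that the only maximal subgroups of $A_n$ that could contain both an $r$-cycle and the permutation $x$ are excluded, so $\langle (r\text{-cycle}), x\rangle = A_n$. Recall that it suffices to check that no maximal subgroup contains both generators, and that maximal subgroups come in three flavors: intransitive, imprimitive, and primitive.

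First I would dispense with the intransitive and imprimitive subgroups using the $r$-cycle. Since $r < n-2 < n < r + p^a$, the hypothesis forces $r > n - p^a \geq n/2$ (as $p^a$ divides $n$ properly, $p^a \leq n/2$), so in fact $r$ is a prime strictly between $n/2$ and $n$. Such an $r$-cycle has exactly $n - r$ fixed points, and one checks that an intransitive subgroup fixing a set of size $k$ can contain an $r$-cycle only if the $r$-cycle's support lies inside a block, i.e.\ only if $r \leq \max(k, n-k)$; but both $k$ and $n-k$ are strictly less than $r$ is impossible, so the constraint $r > n/2$ rules these out except when the fixed set has size $n - r$, which an intransitive subgroup would have to respect — and a short argument shows the $r$-cycle moves points between any nontrivial partition of $[n]$ into two parts of size less than $r$. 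For the imprimitive subgroups, a transitive group preserving a block system of blocks of size $d$ (a proper divisor of $n$) cannot contain an element with a single cycle of the prime length $r > n/2 > d$, since an $r$-cycle would have to permute the blocks in a single cycle of length dividing $r$, forcing $r \mid (n/d)$, impossible for $r > n/2$. So neither intransitive nor imprimitive maximal subgroups contain the $r$-cycle.

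Next I would handle the primitive subgroups, and here the $r$-cycle is again the key tool via Jordan's theorem (Theorem~\ref{thm:JordanPrim}): since $r$ is a prime with $r < n-2$, i.e.\ $r \leq n-3$, no proper primitive subgroup of $A_n$ contains an $r$-cycle. This immediately eliminates all primitive maximal subgroups in the first case. In the second case $r = n-2$, Jordan's theorem no longer applies, so I would instead invoke Theorem~\ref{thm:JonesPrim}: a proper primitive subgroup containing an $(n-2)$-cycle forces $n-1$ to be a prime power. Combining an $(n-2)$-cycle (which, as $r = n-2$, plays the role of an $r$-cycle with two fixed points) — wait, the hypothesis $r = n-2$ gives an $(n-2)$-cycle, and Theorem~\ref{thm:JonesPrim} says this can only lie in a proper primitive subgroup when $n-1$ is a prime power; since $n - 1$ being a power of an odd prime is still possible, the stated extra hypothesis is precisely that $n-1$ is not a power of $2$. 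I would need to check that the prime-power values of $n-1$ permitted by Jones are actually excluded here, which is why the lemma carries the side condition; the cleanest route is that the relevant $3$-transitive groups with $n - 1$ an odd prime power do not in fact contain an $(n-2)$-cycle together with $x$, or are otherwise incompatible.

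Finally I would use the permutation $x$ to close the argument: $x$ is fixed-point-free with all cycles of length at least $p^a$, so in particular $x \in A_n$ has cycle type consistent with membership in a subgroup only if that subgroup's orbit structure accommodates cycles of length $\geq p^a$; the interplay of $r + p^a > n$ with $x$ having no short cycle is what rules out the remaining borderline primitive cases and confirms both elements lie together in no proper subgroup. The main obstacle I expect is the $r = n-2$ boundary case: Jordan's theorem gives no information at $r = n - 3$ exactly, and one must carefully invoke the Classification-based Theorem~\ref{thm:JonesPrim} and rule out the surviving $3$-transitive examples, using the parity/prime-power condition on $n-1$. The intransitive and imprimitive cases, by contrast, are elementary counting arguments driven by $r > n/2$.
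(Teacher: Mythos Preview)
Your proposal has the right list of ingredients but assigns them to the wrong cases, and this creates a genuine gap.

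\textbf{The intransitive case cannot be handled by the $r$-cycle alone.} An $r$-cycle $y$ has $n-r\ge 1$ fixed points, so for every $k$ with $1\le k\le n-r$ there is an intransitive maximal subgroup $(S_k\times S_{n-k})\cap A_n$ containing a conjugate of $y$ (put the $r$-support inside the larger block). Your sentence ``the constraint $r>n/2$ rules these out except when the fixed set has size $n-r$'' is false: it rules out nothing for $k\le n-r$. This is exactly where the element $x$ is required, and where the hypothesis $r+p^a>n$ does its work. If some intransitive $(S_k\times S_{n-k})\cap A_n$ contained conjugates of both $y$ and $x$, then (from $y$) we would need $\min(k,n-k)\le n-r$, while (from $x$, which is fixed-point-free with every cycle of length $\ge p^a$) we would need $\min(k,n-k)\ge p^a$. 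These are incompatible since $n-r<p^a$. Equivalently, and this is how the paper phrases it: the support of $y$ has size $r$ and every cycle of $x$ has support of size $\ge p^a$, so by $r+p^a>n$ they must intersect, forcing $\langle x,y\rangle$ to be transitive. Your final paragraph invokes $r+p^a>n$ but tries to use it against primitive subgroups, where it plays no role; the primitive case is already finished by Jordan/Jones using $y$ alone.

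\textbf{The $r=n-2$ case.} You are puzzled why the hypothesis is ``$n-1$ not a power of $2$'' rather than ``$n-1$ not a prime power.'' The point you are missing is parity: an $r$-cycle lies in $A_n$ only when $r$ is odd, so $r=n-2$ forces $n$ odd and hence $n-1$ even. Thus $n-1$ is a prime power if and only if it is a power of $2$, and Theorem~\ref{thm:JonesPrim} then says directly that no proper primitive subgroup contains the $(n-2)$-cycle. There is nothing further to check about odd-prime-power values of $n-1$ or about ``surviving $3$-transitive examples.''

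Your treatment of the imprimitive case is fine (the paper argues instead that $r\nmid |S_d\wr S_{n/d}|$, which is equivalent), and your use of Jordan for $r\le n-3$ is correct. The fix is simply to swap the roles you gave to $y$ and $x$: use $y$ for the primitive and imprimitive cases, and use the inequality $r+p^a>n$ together with the cycle structure of $x$ for the intransitive case.
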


\begin{proof}
An $r$-cycle $y$ avoids all primitive proper subgroups by Theorems
\ref{thm:JordanPrim} and \ref{thm:JonesPrim}. Moreover, the orders
of the maximal imprimitive subgroups are not divisible by $r$. This
leaves the intransitive maximal subgroups. Since $r+p^{a}>n$, the
support of every cycle of $x$ intersects the support of the unique
nontrivial cycle of $y$, hence the subgroup generated by $x$ and
$y$ is transitive.
\end{proof}
We say that $g\in A_{n}$ or $S_{n}$ is a \emph{base-$p$ element
}if the cycle structure of $g$ agrees with the base $p$ representation
of $n$. That is, if $n$ has base $p$ representation $\alpha_{0}p^{0}+\alpha_{1}p^{1}+\cdots$,
then a base-$p$ element of $A_{n}$ has $\alpha_{0}$ fixed points,
$\alpha_{1}$ cycles of length $p$, $\alpha_{2}$ cycles of length
$p^{2}$, and so forth. Thus, by Lemma~\ref{lem:SylowSgOrbits},
the orbits of a base-$p$ element coincide with those of a Sylow $p$-subgroup.
It is clear that $A_{n}$ has a base-$p$ element for every $p\neq2$,
and that $A_{n}$ has a base-$2$ element if and only if the digit
sum $\alpha_{1}+\alpha_{2}+\cdots$ is even.

We can take the element $x$ in Lemma~\ref{lem:GoodSieve} to be
a base-$p$ element when one exists. More broadly, it is not difficult
to describe the intransitive and imprimitive maximal subgroups containing
a base-$p$ element.
\begin{lem}
\label{lem:BasepDivis}A base-$p$ element of $A_{n}$ fixes some
set of size $i$ in the natural action if and only if $p\nmid{n \choose i}$.
\end{lem}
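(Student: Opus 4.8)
The plan is to reduce the set-theoretic condition to a comparison of base-$p$ digits and then invoke a classical congruence for binomial coefficients. Write the base-$p$ representation of $n$ as $n=\alpha_0 p^0+\alpha_1 p^1+\cdots$, so that by the definition of a base-$p$ element together with Lemma~\ref{lem:SylowSgOrbits}, our element $g$ has exactly $\alpha_j$ cycles of length $p^j$ for each $j$. The starting observation is that $g$ fixes a set $S$ setwise if and only if $S$ is a union of cycles of $g$, since the cycles of $g$ are precisely the orbits of $\langle g\rangle$ on $\left[n\right]$.

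From here I would show that $g$ fixes some set of size $i$ if and only if $i$ is a subset sum of the multiset consisting of $\alpha_j$ copies of $p^j$ for each $j$; equivalently, if and only if $i=\sum_j \beta_j p^j$ for some integers $0\le\beta_j\le\alpha_j$. The forward direction is immediate, taking $\beta_j$ of the cycles of length $p^j$. For the converse, a fixed set of size $i$ contains some number $\beta_j\le\alpha_j$ of the cycles of length $p^j$, and summing their lengths gives such an expression for $i$.

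The crux is then to recognise this expression as a base-$p$ representation. Because each $\alpha_j\le p-1$, any expression $i=\sum_j\beta_j p^j$ with $0\le\beta_j\le\alpha_j$ has all of its digits $\beta_j$ in the range $0,\dots,p-1$, and so is the unique base-$p$ representation of $i$. Hence $g$ fixes some set of size $i$ precisely when the base-$p$ digits of $i$ are coordinatewise dominated by those of $n$. By Lucas' theorem, ${n\choose i}\equiv\prod_j{\alpha_j\choose\beta_j}\pmod p$, and this product fails to vanish modulo $p$ exactly when $\beta_j\le\alpha_j$ for every $j$. Chaining the two equivalences completes the proof.

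Since the argument is a direct application of Lucas' theorem once the combinatorial reformulation is in place, I do not expect a genuine obstacle. The single point requiring care is the verification that the subset-sum expression for $i$ coincides with its base-$p$ representation, which is exactly where the digit bound $\alpha_j\le p-1$ enters; without it, carries could occur and the clean correspondence with ${n\choose i}$ would break down.
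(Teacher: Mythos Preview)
Your argument is correct. It differs from the paper's, which is a two-line Sylow/orbit--stabilizer argument: since the orbits on $[n]$ of a base-$p$ element and of a Sylow $p$-subgroup $P$ coincide, a size-$i$ set is fixed by one iff by the other; and $P$ fixes some $i$-set iff $P$ is contained in a point stabilizer in the action of $A_n$ on $i$-subsets, which by orbit--stabilizer happens precisely when the index $\binom{n}{i}$ is coprime to $p$. Your route instead unpacks the combinatorics directly---achievable sizes are subset sums of the cycle-length multiset, which by the digit bound $\alpha_j\le p-1$ are exactly the $i$ whose base-$p$ digits are dominated by those of $n$---and then invokes Lucas' theorem. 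The paper's approach is shorter and more conceptual (and makes clear why the base-$p$ element is a proxy for the full Sylow subgroup), while yours is more explicit and avoids any appeal to Sylow theory, at the cost of re-proving via Lucas what the group-theoretic argument gets for free. One cosmetic point: your ``forward'' and ``converse'' labels are swapped relative to the biconditional as you stated it, though the content of each direction is right.
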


\begin{proof}
The orbit system in the action on $\left[n\right]$ of a Sylow subgroup
and of a base-$p$ element coincide. The result now follows from the
Orbit-Stabilizer theorem.
\end{proof}
There is also a simple condition that suffices to show that a base-$p$
element does not fix a block system, as follows.
\begin{lem}
\label{lem:CarryConditionImprim}Suppose that the base-$p$ element
$x$ of $A_{n}$ preserves a partition $\pi$ having $e$ blocks of
size $d$. Then no carry occurs when multiplying $d$ by $e$ in base
$p$. Equivalently, if $d=\sum_{i\geq0}\delta_{i}p^{i}$ and $e=\sum_{i\geq0}\epsilon_{i}p^{i}$
are the base-$p$ representations for $d,e$, then for each $k$ it
holds that $\sum_{0\leq i\leq k}\delta_{i}\epsilon_{k-i}<p$.
\end{lem}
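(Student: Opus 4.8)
The plan is to recast the conclusion as an identity on digit sums and then trap that identity between a group-theoretic lower bound and an elementary number-theoretic upper bound. Write $s_{p}(m)$ for the sum of the base-$p$ digits of $m$, and recall that $n=de$. Carrying out the schoolbook multiplication of $d$ by $e$ in base $p$, the stated condition $\sum_{0\leq i\leq k}\delta_{i}\epsilon_{k-i}<p$ says exactly that no carry occurs, and in that case the convolution sums $\sigma_{k}=\sum_{i}\delta_{i}\epsilon_{k-i}$ are themselves the base-$p$ digits of $de$, whence $s_{p}(de)=\bigl(\sum_{i}\delta_{i}\bigr)\bigl(\sum_{j}\epsilon_{j}\bigr)=s_{p}(d)\,s_{p}(e)$. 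Since every carry strictly lowers the digit sum, the no-carry condition is equivalent to the equality $s_{p}(de)=s_{p}(d)\,s_{p}(e)$. I would first record the standard inequality $s_{p}(de)\leq s_{p}(d)\,s_{p}(e)$, which holds for all $d,e$: it follows from subadditivity $s_{p}(a+b)\leq s_{p}(a)+s_{p}(b)$ applied to $de=\sum_{j}\epsilon_{j}\,(p^{j}d)$ together with $s_{p}(p^{j}d)=s_{p}(d)$. It therefore suffices to prove the \emph{reverse} inequality $s_{p}(de)\geq s_{p}(d)\,s_{p}(e)$ under the hypothesis that the base-$p$ element $x$ preserves $\pi$.

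The reverse inequality is where the block system enters. The two digit sums will be read off as orbit counts, using the elementary fact that any permutation of $p$-power order on $m$ points has at least $s_{p}(m)$ cycles: its cycle lengths are powers of $p$, and among all expressions $m=\sum_{a}c_{a}p^{a}$ with $c_{a}\geq 0$ the count $\sum_{a}c_{a}$ is minimized (equal to $s_{p}(m)$) by the base-$p$ expansion. Applied to $x$ itself, which as a base-$p$ element has exactly $\alpha_{j}$ cycles of length $p^{j}$, this shows that $x$ has precisely $s_{p}(n)=s_{p}(de)$ cycles; applied to the permutation $\bar{x}$ induced on the set $\mathcal{B}$ of $e$ blocks, it shows that the number of $\langle x\rangle$-orbits on $\mathcal{B}$ is at least $s_{p}(e)$. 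Now set $P=\langle x\rangle$ and partition the cycles of $x$ according to the $P$-orbit of blocks they meet: since each cycle $C$ is a single $P$-orbit on $[n]$, the blocks meeting $C$ form a single $P$-orbit on $\mathcal{B}$, so $C$ lies inside the union $U_{\omega}$ of exactly one block-orbit $\omega$. For a representative block $B$ of $\omega$, the stabilizer $S=\mathrm{Stab}_{P}(B)$ is a cyclic $p$-group (generated by $x^{\ell}$, where $\ell$ is the size of $\omega$), and sending a point to its $P$-orbit gives a bijection between the $S$-orbits on $B$ and the $P$-orbits contained in $U_{\omega}$: two points of $B$ lie in the same $P$-orbit if and only if they lie in the same $S$-orbit, because any power of $x$ carrying one point of $B$ to another must preserve $B$ and hence lie in $S$. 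Consequently the number of cycles of $x$ inside $U_{\omega}$ equals the number of $S$-orbits on $B$, which is at least $s_{p}(d)$. Summing over the block-orbits,
\[
s_{p}(de)=\sum_{\omega}\#\{\text{cycles of }x\text{ in }U_{\omega}\}\ \geq\ (\#\,\text{block-orbits})\cdot s_{p}(d)\ \geq\ s_{p}(e)\,s_{p}(d),
\]
which is the reverse inequality. Together with the upper bound this forces $s_{p}(de)=s_{p}(d)\,s_{p}(e)$, that is, no carries, completing the proof.

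The main obstacle I anticipate is the bookkeeping in the middle step rather than any deep structural input: one must verify that the cycles of $x$ genuinely partition across the block-orbits, and that within a single block-orbit the cycle count is controlled by the cyclic stabilizer of one representative block. Once the orbit correspondence between $P$-orbits on $U_{\omega}$ and $S$-orbits on $B$ is in place, both halves of the sandwich are short, and the equivalence of the no-carry condition with the digit-sum identity $s_{p}(de)=s_{p}(d)\,s_{p}(e)$ makes the two inequalities meet exactly.
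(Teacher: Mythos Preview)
Your proof is correct, and it takes a genuinely different route from the paper's. The paper embeds $\langle x\rangle$ in a Sylow $p$-subgroup $P$ of the block stabilizer $H\cong S_{d}\wr S_{e}$, observes that the orbits of $\langle x\rangle$, of $P$, and of a full Sylow $p$-subgroup of $S_{n}$ all coincide (via Lemma~\ref{lem:SylowSgOrbits}), and then reads off from the wreath product structure $P\cong P_{d}\wr P_{e}$ that $P$ has exactly $\sum_{i+j=k}\delta_{i}\epsilon_{j}$ orbits of size $p^{k}$; since these are the digits $\alpha_{k}$ of $n$, they are automatically below $p$. Your argument never leaves the cyclic group $\langle x\rangle$: you recast ``no carry'' as the digit-sum identity $s_{p}(de)=s_{p}(d)\,s_{p}(e)$, get the upper bound from subadditivity, and get the lower bound by counting cycles of $x$ block-orbit by block-orbit, using only that a $p$-element on $m$ points has at least $s_{p}(m)$ cycles. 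The paper's approach is shorter and yields the stronger statement that the convolution sums are \emph{exactly} the digits of $n$, at the cost of invoking Sylow theory and the wreath-product orbit description; yours is more elementary and self-contained, avoiding Lemma~\ref{lem:SylowSgOrbits} and the wreath product entirely, but it is a sandwich argument and so only gives the inequality claimed rather than the explicit orbit count.
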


\begin{proof}
The equivalence of the two conclusions is straightforward from definitions.

Let $H$ be the stabilizer in $S_{n}$ of $\pi$. By basic Sylow theory,
the subgroup $\left\langle x\right\rangle $ is contained in a Sylow
$p$-subgroup $P$ of $H$, which is contained in a Sylow $p$-subgroup
of $S_{n}$. It follows from Lemma~\ref{lem:SylowSgOrbits} that
the orbits of the three subgroups agree.

An order argument shows that $P\cong P_{d}\wr P_{e}$, where $P_{d}$
and $P_{e}$ are respectively Sylow $p$-subgroup of $S_{d}$ and
$S_{e}$. (Here, $S_{d}$ acts on a block of $\pi$, while $S_{e}$
acts on the set of blocks.) It now follows directly (see for example
\cite[Theorem 7.25]{Rotman:1995}) that for each $k\geq0$, the action
of $P$ on $[n]$ has $\alpha_{k}=\sum_{0\leq i\leq k}\delta_{i}\epsilon_{k-i}$
orbits of size $p^{k}$. By definition of the base-$p$ representation
of $n$, we must have $\alpha_{k}<p$.

This completes the proof for odd $p$. For even $p$, we complete
the proof by intersecting with $A_{n}$.
\end{proof}
Lemma~\ref{lem:CarryConditionImprim} is less useful for large primes.
For these we need another condition.
\begin{lem}
\label{lem:DivisImprim}Suppose that $r>\sqrt{n}$, and let the base-$r$
representation of $n$ be $\beta_{0}+\beta_{1}r$, where $\beta_{0}>0$.
If a base-$r$ element $x$ of $A_{n}$ preserves a partition $\pi$
having $e$ blocks of size $d$, then either $d$ or $e$ divides
$\gcd(\beta_{0},\beta_{1})$.
\end{lem}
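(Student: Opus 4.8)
The plan is to mirror the orbit-counting argument used for Lemma~\ref{lem:CarryConditionImprim}, now exploiting that $r$ is so large that $n$, $d$, and $e$ all have at most two base-$r$ digits. Since $r>\sqrt{n}$ we have $n<r^{2}$, so I write the base-$r$ expansions $d=\delta_{0}+\delta_{1}r$ and $e=\epsilon_{0}+\epsilon_{1}r$ with $0\le\delta_{i},\epsilon_{i}<r$; here $\delta_{2}=\epsilon_{2}=0$ because $d,e\le n<r^{2}$.

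First I would set up the Sylow picture exactly as before. As $x$ preserves $\pi$, the cyclic group $\langle x\rangle$ lies in a Sylow $r$-subgroup $P$ of the stabilizer $H\cong S_{d}\wr S_{e}$ of $\pi$ in $S_{n}$, and $P$ in turn lies in a Sylow $r$-subgroup of $S_{n}$. An order argument gives $P\cong P_{d}\wr P_{e}$ for Sylow $r$-subgroups $P_{d}\le S_{d}$ and $P_{e}\le S_{e}$, and Lemma~\ref{lem:SylowSgOrbits} shows that $\langle x\rangle$, $P$, and the Sylow subgroup of $S_{n}$ all induce the same partition of $[n]$ into orbits as the base-$r$ element $x$, namely $\beta_{0}$ fixed points and $\beta_{1}$ orbits of size $r$ (and none of size $r^{2}$ or larger, as $n<r^{2}$). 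The orbit-count formula established in the proof of Lemma~\ref{lem:CarryConditionImprim} then evaluates the number of orbits of $P$ of size $r^{k}$ as $\sum_{0\le i\le k}\delta_{i}\epsilon_{k-i}$. Matching the counts of orbits of sizes $1$, $r$, and $r^{2}$ yields
\[
\delta_{0}\epsilon_{0}=\beta_{0},\qquad \delta_{0}\epsilon_{1}+\delta_{1}\epsilon_{0}=\beta_{1},\qquad \delta_{1}\epsilon_{1}=0 .
\]

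The endgame is then purely arithmetic. The third equation forces $\delta_{1}=0$ or $\epsilon_{1}=0$ (equivalently $d<r$ or $e<r$, which is in any case inevitable since $de=n<r^{2}$). If $\delta_{1}=0$ then $d=\delta_{0}$, and the first two equations read $d\epsilon_{0}=\beta_{0}$ and $d\epsilon_{1}=\beta_{1}$, so $d\mid\beta_{0}$ and $d\mid\beta_{1}$, giving $d\mid\gcd(\beta_{0},\beta_{1})$; the case $\epsilon_{1}=0$ is symmetric and gives $e\mid\gcd(\beta_{0},\beta_{1})$. (When both leading digits vanish the second equation forces $\beta_{1}=0$, and either conclusion holds trivially.) I expect the only real subtlety to be the justification that the orbits of $\langle x\rangle$ and of the whole Sylow subgroup of $S_{n}$ literally coincide, so that the convolution counts may be equated on the nose with the digits $\beta_{k}$ rather than merely bounded by them as in Lemma~\ref{lem:CarryConditionImprim}; but this is exactly the sandwiching observation already invoked there, so it poses no new difficulty.
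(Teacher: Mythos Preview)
Your proposal is correct and follows essentially the same argument as the paper: reuse the Sylow/wreath-product orbit count from Lemma~\ref{lem:CarryConditionImprim} to obtain $\beta_{0}=\delta_{0}\epsilon_{0}$, $\beta_{1}=\delta_{0}\epsilon_{1}+\delta_{1}\epsilon_{0}$, and $\delta_{1}\epsilon_{1}=0$, then read off the divisibility. The paper's proof is terser but identical in content.
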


\begin{proof}
Let $d$ and $e$ respectively have base-$r$ representation $\delta_{0}+\delta_{1}r$
and $\epsilon_{0}+\epsilon_{1}r$. By a similar argument as in the
proof of Lemma~\ref{lem:CarryConditionImprim}, we have $\beta_{0}=\delta_{0}\epsilon_{0}$,
$\beta_{1}=\delta_{0}\epsilon_{1}+\delta_{1}\epsilon_{0}$, and at
least one of $\delta_{1},\epsilon_{1}$ must be $0$.

Now if $\delta_{1}=0$, then $d=\delta_{0}$ divides $\beta_{0}=\delta_{0}\epsilon_{0}$
and $\beta_{1}=\delta_{0}\epsilon_{1}$; while if $\epsilon_{1}=0$,
then $e=\epsilon_{0}$ similarly divides both $\beta_{0}$ and $\beta_{1}$.
\end{proof}

\section{\label{sec:Strategy}Strategy and details of computation}

There are two requirements for verifying Theorem~\ref{thm:MainTheorem}.
We must be able to show that a given $A_{n}$ is generated by an $r$-element
and a $p$-power element, and we must compute very quickly.

\subsection{\label{subsec:StrategyGeneraton}Strategy for checking generation}

The second two authors showed in \cite[Section 5]{Shareshian/Woodroofe:2018}
that (a stronger statement than) the condition of Lemma~\ref{lem:GoodSieve}
holds with high asymptotic density. Indeed, running our code on various
ranges suggests that about $O(\sqrt{m})$ numbers in the range from
$1$ to $m$ fail this condition.

Thus, the strategy for computation will be to find for each $n$ the
largest prime-power factor $p^{a}$ of $n$ and the largest prime
$r$ smaller than (or possibly equal to) $n-2$. If $p^{a}+r>n$,
then a fixed-point free element of order $p^{a}$ and an $r$-cycle
invariably generate. This verifies invariable generation for most
values of $n$.

For the remaining values of $n$, we seek numbers of the form $cr$
strictly between $n-p^{a}$ and $n-2$, where $r$ is a prime and
$c$ is smaller than $\sqrt{n/2}$. The product $g$ of $c$ cycles
of length $r$ is frequently helpful for invariable generation. By
Theorem~\ref{thm:LiebeckSaxlPrim} and the remark following it, this
$g$ is not in any proper primitive subgroup of $A_{n}$, except possibly
if $n=\frac{q^{d}-1}{q-1}$ and $n-c\cdot r=\frac{q^{i}-1}{q-1}$
for some $i\geq2$. In particular, in this case $n-1$ and $n-c\cdot r-1$
have a common prime power divisor, which is easy to detect computationally.

Having eliminated the possibility of a proper primitive subgroup containing
$g$, we must also avoid intransitive and imprimitive subgroups. We
let $p_{1}$ and $p_{2}$ be the primes associated with the largest
two odd prime-power divisors of $n$, and combine $g$ with a base-$p_{1}$
or base-$p_{2}$ element. (We avoid using $2$ here for convenience,
so that we can avoid checking whether $A_{n}$ has a base-2 element.)
We consider two primes, as there are situations where a given prime
may fail to yield transitivity with one or all possible choices of
$r$.
\begin{example}
\label{exa:31416}Consider $n=31416=2^{3}\cdot3\cdot7\cdot11\cdot17$.
There is no prime $r<n$ such that $r+17>n$, so we must look at $r$
such that $c\cdot r$ is close to $n-3$. The prime $r=7853$ is a
useful such choice. However, both the product of four $r$-cycles
and an appropriate base-$17$ element respect a partition of $n$
into $2\cdot7854+2\cdot7854$. Here, the base 17 representation of
$n$ is $12\cdot17+6\cdot17^{2}+6\cdot17^{3}$, and $2\cdot7854=2\cdot7853+2\cdot1=6\cdot17+3\cdot17^{2}+3\cdot17^{3}$.
The base 11 representation of $n$ is the less symmetric $n=7\cdot11+6\cdot11^{2}+1\cdot11^{3}+2\cdot11^{4}$,
and indeed a base-11 element and a product of $7853$-cycles generate
$A_{31416}$.
\end{example}

We check the condition of Lemma~\ref{lem:CarryConditionImprim} on
each $d$ dividing $\gcd(c,n-c\cdot r)$ (using Lemma~\ref{lem:DivisImprim}),
then check Lemma~\ref{lem:BasepDivis} on every $i$ of the form
$a\cdot r+b$. Out to $10^{15}$, there is always such an $r$ satisfying
the desired conditions together with some prime divisor of $n$. 

\subsection{Implementation}

We perform the computation in two phases.

\subsubsection*{Phase 1:}

Because most values of $n$ satisfy the condition from Lemma~\ref{lem:GoodSieve},
it is critical to check this condition quickly. Indeed, almost all
of the time spent in computing is spent in verifying this condition.

We must find all or most of the primes out to the upper end of the
range we are checking, and also find a large prime-power factor of
each integer in the range. It is well-known that the Sieve of Eratosthenes
is a fast algorithm for generating primes, but is space-hungry, taking
$O(m)$ memory. More space-efficient is the Segmented Sieve of Eratosthenes.
The Segmented Sieve uses the fact that each non-prime number $n$
is divisible by some number smaller than $\sqrt{n}$. Thus, to compute
primes out to $m$, we can find all primes smaller than $\sqrt{m}$,
then repeatedly check numbers in segments of size $\sqrt{m}$ for
divisibility by some small prime. This allows us to list primes out
to $m$ using $O(\sqrt{m})$ memory. 

We modify the Segmented Sieve of Eratosthenes algorithm slightly to
iterate over prime powers smaller than $\sqrt{m}$, rather than just
primes. Then, instead of simply marking each $n$ as not prime, we
record the largest-seen prime-power divisor. Thus, at the end of sieving,
we have found the largest prime-power divisor of each $n$ that is
smaller than $\sqrt{m}$. By performing a bit more bookkeeping, we
are able to identify the numbers that are not $B$-power-smooth, for
some $B$ larger than the largest expected prime gap. We record $B$
in place of the largest prime-power divisor of the non-$B$-power-smooth
numbers. See \cite[Chapter 3.2]{Crandall/Pomerance:2005} for similar
variations on the Segmented Sieve of Eratosthenes.

For further practical efficiency on real computer hardware, we use
the primesieve library \cite{Walisch:2022}. This library uses segments
of length less than $\sqrt{m}$, which has advantages for cache efficiency,
but which requires keeping different lists of primes. The library
handles the adjustments for smaller segments transparently, and has
other optimizations. We sieve separately for power-smooth numbers
and large prime-power divisors. 

The upshot is that for each segment, we can quickly calculate the
primes in the segment, and the largest prime-power divisors for power-smooth
numbers in the segment. Now for each number $n$ in each segment,
we add the prime preceding $n-2$ with the largest prime-power divisor
(or a large enough placeholder value if $n$ is not power-smooth).
If this is smaller than $n$, we see if $n-2$ is prime and $n-1$
is not a power of $2$. If this fails, we record the number (together
with its largest prime-power divisor) for Phase~2. 
\begin{rem}
It is worth remarking that, for $B$ large enough, we do not expect
to ever see a number that passes to Phase~2 which is not $B$-power-smooth.
That is, we expect to see primes on any interval of sufficient length
before $m$. Our code uses $B=5(\log_{2}m)^{2}$, and gives a warning
if it ever does pass a non-$B$-power-smooth integer on to Phase~2.
\end{rem}

The output from Phase~1 in our computation to $10^{15}$ is publicly
available as a dataset on the Zenodo repository \cite{Guralnick/Shareshian/Woodroofe:2022a},
and comprises about $26.5$ million numbers. The first segment (comprising
the 0.8 million fairly power-smooth numbers out to 1 trillion) is
also posted as an arXiv ancillary file. Code for both phases of the
computation is available on GitHub \cite{Guralnick/Shareshian/Woodroofe:2022b}.

\subsubsection*{Phase 2:}

At the conclusion of Phase 1, we have a relatively small number of
fairly power-smooth numbers, where the specific smoothness threshold
varies according to the distribution of primes. Running our code on
various ranges suggests that we can expect around $O(\sqrt{m})$ such
power-smooth numbers between 25 and $m$. In Phase 2, we use one of
the order $r$ elements suggested by Theorem~\ref{thm:LiebeckSaxlPrim}
to show invariable generation for these leftover numbers.

Thus, for each $n$ left over from Phase 1, we have already stored
the largest prime-power factor. As these values of $n$ are power-smooth,
it is efficient to use trial division to find two more large prime-power
factors. We keep the largest two odd prime-power divisors.

We then look for integers of the form $cr$ that are close to $n$,
where $r$ is prime and $c$ is small. Let $p^{a}$ be the largest
prime-power factor of $n$. As there is somewhat less to go wrong
with transitivity for smaller values of $c$, we start by looking
at $c$ from $2$ to $p^{a}/2$, and checking for a prime $r$ so
that $n-p^{a}<cr<n-2$. If smaller values of $c$ do not yield the
desired, then we factor each number between $n-p^{a}$ and $n-2$,
and take $r$ to be the prime factor (if any) that is larger than
$2\sqrt{n}$. 

We check for each candidate $cr$ that $n$ and $n-cr$ do not have
the form $\frac{q^{d}-1}{q-1}$ and $\frac{q^{i}-1}{q-1}$ for some
common $q$ (avoiding the primitive groups of Theorem~\ref{thm:LiebeckSaxlPrim}
not already eliminated by requiring $k>2$). This yields an element
of order $r$ which avoids all the primitive subgroups of $A_{n}$,
and which does not obviously fail to invariably generate $A_{n}$
with a prime-power element.

We now check primitivity for the subgroup generated by an $r$-element
and an element of order one of the two largest prime power divisors
of $n$. We apply Lemmas~\ref{lem:BasepDivis}, \ref{lem:CarryConditionImprim},
and \ref{lem:DivisImprim}. Specifically, we check for each $d$ dividing
$\gcd(c,n-c\cdot r)$ that multiplication of $d$ and $n/d$ yields
a carry in the associated prime base. Finally, we check for a system
of intransitivity over each partition of $n$ having a part $a\cdot r+b$,
where $a\leq c$ and $b\leq n-cr$.

Although the checks that we need to do for these numbers are considerably
more expensive than those of Phase 1, we only need to perform them
on a small collection of numbers. 
\begin{rem}
We prototyped the code for Phase 1 in GAP for ease of coding and experimentation.
We then ported the code from this phase to C, speeding the computation
by a factor of around 20.  The code from Phase 2 is not a speed bottleneck,
and the GAP library is convenient to use for some of the checks here,
so we have kept it in GAP.
\end{rem}

\begin{rem}
The average value of $c$ over large ranges has been between $2$
and $3$ in experiments. Occasional numbers from Phase 2 require a
$c$ that is larger, however. See also Section~\ref{subsec:BadN}.
\end{rem}

\subsection{Complexity}

Phase 1 is at its core a Segmented Sieve of Eratosthenes, requiring
$O(m\log\log m)$ operations and $O(\sqrt{m})$ space. Our use of
prime powers instead of primes makes no difference in order of complexity,
as prime powers have nearly the same density as primes. 

We also store the power-smooth numbers failing Phase 1: these also
appear experimentally to take just over $O(\sqrt{m})$ space. Indeed,
if we assume Cramer's conjecture \cite{Cramer:1936} (as certainly
holds in the ranges in which we are likely to compute), then prime
gaps are of at most $O(\log^{2}x)$ size. An estimate of Rankin (in
\cite{Rankin:1938}, see also \cite{Granville:2008}) gives the number
of $\log^{2}x$-smooth numbers in $[1,x]$ to be $x^{1/2+O(1/\log\log x)}$.
Assuming that the number of $\log^{2}x$-smooth numbers on an interval
on a length of $\log^{2}x$ is typically well-behaved, a estimate
for the number of failures is $O(\int_{1}^{m}\frac{\log^{2}x}{x}\cdot\sqrt{x}\,dx)=O(\sqrt{m}\cdot\log^{2}m)$.
Moderate optimizations in the use of space are likely possible, but
as time is a much bigger bottleneck, we have not pursued this. 

Phase 2 does not require any significant additional memory. It also
does not appear to take much additional time, although a careful time
analysis is more elusive. Let $\ell$ be the number of integers failing
Phase 1. Then for each, we must in the worst case find and check $O(\sqrt{\ell})$
primes $r$. The checks for a $PSL$ action and for primitivity are
not expensive. The check for transitivity on an integer $n$ of the
form $c\cdot r+k$ requires examining each number of the form $a\cdot r+b$,
where $0\leq a<c/2$ and $0\leq b\leq k$.

Although making the analysis careful is difficult here, we give a
heuristic argument for the time. By the same argument as in \cite[Section 5]{Shareshian/Woodroofe:2018},
we expect there to be a prime in the short interval $[(n-p^{a})/2,(n-3)/2]$
with high asymptotic density. We must also satisfy other transitivity
and primitivity tests, but these are also passed with high asymptotic
density. Thus, we may assume that $c=2$, so long as occasional numbers
requiring a higher associated $c$ do not cost greatly more time.
Assuming Cramer's conjecture, we also have $k<\log^{2}n$. Making
broad assumptions as above, we have $\ell=O(\sqrt{m}\cdot\log^{2}m)$.
Assuming that we may find primes quickly, the time is easily subsumed
within the $O(m\log\log m)$ time for Phase~1.

This heuristic bears up in practice, where Phase~2 runs quickly.
When both phases were implemented in GAP, Phase~2 takes only as much
time as that for a few segments in Phase~1.

\subsection{Small integers}

A few small cases included in Theorem~\ref{thm:MainTheorem} must
be handled separately. Theorem~\ref{thm:LiebeckSaxlPrim} has an
exception at $n=24$, but this meets the condition of Phase 1 (with,
say, $r=19$ and $p^{a}=2^{3}$). On the other hand, the numbers $n=6$
and $n=12$ cannot be handled by the checks in Phase 1. It is easy
to verify using GAP or facts about primitive subgroups that $A_{6}$
is generated by a $5$-cycle and a base-$2$ element, while $A_{12}$
is generated by an $11$-cycle and a base-$3$ element.

\begin{table}
\begin{centering}
\begin{tabular}{llll}
\toprule 
$p$ &
\multicolumn{3}{l}{base-$p$ representation (place order is from least to most significant)}\tabularnewline
\midrule
\addlinespace[0.3em]
$83$ &
$n$ &
= &
$0,30,72,44,52,50\quad=30\cdot p+72\cdot p^{2}+44\cdot p^{3}+52\cdot p^{4}+50\cdot p^{5}$\tabularnewline
 &
$2rp$ &
 &
$0,15,49,19,34,23$\tabularnewline
 &
$n-2rp$ &
 &
$0,15,23,25,18,27$\tabularnewline
\addlinespace[0.5em]
$53$ &
$n$ &
 &
$0,43,23,38,48,52,8$\tabularnewline
 &
$4rp$ &
 &
$0,37,11,34,33,16,5$\tabularnewline
 &
$n-4rp$ &
 &
$0,6,12,4,15,36,3$\tabularnewline
\addlinespace[0.5em]
$47$ &
$n$ &
 &
$0,32,38,30,29,23,18$\tabularnewline
 &
$3rp$ &
 &
$0,23,13,2,26,12,7$\tabularnewline
 &
$n-3rp$ &
 &
$0,9,25,28,3,11,11$\tabularnewline
\addlinespace[0.5em]
$29$ &
$n$ &
 &
$0,12,13,2,22,8,16,11$\tabularnewline
 &
 &
 &
transitive\tabularnewline
\addlinespace[0.5em]
$11$ &
$n$ &
 &
$0,10,6,5,8,8,7,4,6,7,7$\tabularnewline
 &
$6rp$ &
 &
$0,2,2,2,4,5,6,0,6,4,1$\tabularnewline
 &
$n-6rp$ &
 &
$0,8,4,3,4,3,1,4,0,3,6$\tabularnewline
\addlinespace[0.5em]
$7$ &
$n$ &
 &
$0,6,6,1,5,6,5,0,3,0,6,2,0,2$\tabularnewline
 &
 &
 &
transitive\tabularnewline
\addlinespace[0.5em]
$3$ &
$n$ &
 &
$0,0,0,2,2,1,1,2,0,1,2,2,0,0,2,0,1,2,1,0,0,1,0,2$\tabularnewline
 &
 &
 &
transitive\tabularnewline
\bottomrule
\end{tabular}\bigskip{}
\par\end{centering}
\caption{\label{tab:BadN}Prime divisors $p$ of $n=199\protect\comma445\protect\comma521\protect\comma968$,
with a system of intransitivity for a base-$p$ element and a base-$r$
element, if any. Here $r=555\protect\comma558\protect\comma557$.}
\end{table}

\subsection{\label{subsec:BadN}The integer $199\protect\comma445\protect\comma521\protect\comma968$}

Our computational strategy works without trouble for all integers
up to $10^{15}$ with five exceptions, as follow:\nopagebreak\smallskip{}

\begin{tabular}{lll}
$n_{1}=199\comma445\comma521\comma968$, &
$n_{2}=5\comma760\comma706\comma652\comma536$, &
$n_{3}=6\comma421\comma990\comma708\comma848$,\tabularnewline
$n_{4}=22\comma062\comma987\comma063\comma208$,~~ &
$n_{5}=138\comma057\comma417\comma511\comma650$.~~ &
\tabularnewline
\end{tabular}\smallskip{}

\noindent For these five integers, we modify the Phase~2 strategy
slightly to also examine smaller odd prime-power divisors. All but
$n_{1}$ and $n_{3}$ are generated by an element of large prime $r$
order (as in Phase~2), together with the base-$p$ element for $p$
the third largest (odd) prime-power divisor. For $n_{1}$ and $n_{3}$,
we must use the fourth largest prime-power divisor for $p$.

To give some insight, we examine carefully the situation with $n=n_{1}=199\comma445\comma521\comma968=2^{4}\cdot3^{3}\cdot7\cdot11\cdot29\cdot47\cdot53\cdot83$.
This value of $n$ has several candidate large primes $r$ that divide
a slightly smaller integer. We focus on $n-5=359\cdot555\comma558\comma557$,
so $r=555\comma558\comma557$. The corresponding $r$-element invariably
generates $A_{n}$ together with a base-$p$ element for the odd primes
$p=3,7,$ or $29$, but it fails transitivity for the primes $p=11,47,53,$
and $83$. In Table~\ref{tab:BadN}, we show the base-$p$ representation
for $n$ for each odd prime divisor $n$, together with a partition
of $n$ giving a system of intransitivity (if any).

Our main computation verified that the primes $p=83$ and $53$ fail
to give invariable generation with any candidate value of $r$. We
have verified by similar additional computation that the prime $p=47$
fails transitivity for with any candidate value of $r$. However,
the primes $p=3$ and $p=29$ each yield invariable generation of
$A_{n}$ at several values of $r\geq\sqrt{2n}$ (including $r=555\comma558\comma557$,
as we have already stated).

It is additionally worth noticing that $r=728\comma209>\sqrt{2n}$,
a divisor of $n-3$, fails to yield transitivity with any prime divisor
of $n$.

\section{\label{sec:Discussion}Discussion}

In light of our computational result, it is reasonable to ask:
\begin{question}
\label{que:MainQues}For every $n\geq5$, is the alternating group
$A_{n}$ invariably generated by an element whose order is a prime
power divisor $p^{a}$ of $n$, together with an element of prime
order $r>\sqrt{n}$?
\end{question}

We believe that counterexamples, if any, to the condition of Question~\ref{que:MainQues}
must be vanishingly rare. Theorem~\ref{thm:MainTheorem} says that
any counterexample must satisfy $n>10^{15}$. 

It would already be somewhat interesting to give a proof that does
not rely on the Riemann Hypothesis that the set of counterexamples
to Question~\ref{que:MainQues} has asymptotic density $0$. (The
second two authors gave a conditional proof of such a result with
the stronger restriction that $a=1$ in \cite[Section 5]{Shareshian/Woodroofe:2018}.)
\begin{note}
After this paper was arXived, Teräväinen answered the question implicit
in the previous paragraph in a strong form in \cite{Teravainen:2022UNP},
where he gave an explicit upper bound on the density of the set of
$n$ for which $A_{n}$ is not generated by two elements of prime
order.
\end{note}

\subsection{Number theoretic ingredients}

The strategy discussed in Section~\ref{subsec:StrategyGeneraton}
suggests that an important ingredient for answering Question~\ref{que:MainQues}
will be the existence of integers with large prime factors on short
intervals. This problem has been studied a certain amount in the number
theory literature. Most of the papers on this problem focus on intervals
of length close to that of $[x-\sqrt{x},x]$. For sufficiently large
$x$, a result of Harman \cite[Theorem 6.1]{Harman:2007} shows we
may find a number with a prime factor greater than $x^{37/50}$, while
one of Jia and Liu \cite{Jia/Liu:2000} yields a number with a prime
factor greater than $x^{25/26-\epsilon}$ on a slightly longer interval.

However, intervals of length $\sqrt{x}$ are already longer than we
generally require. The set of integers that are $\log^{1+\epsilon}x$-smooth
already have asymptotic density $0$, as shown by Rankin in \cite{Rankin:1938}.
Meanwhile, a prime factor of order $x^{1/2+\epsilon}$ will suffice
for use in Theorem~\ref{thm:LiebeckSaxlPrim}, and $x^{25/26}$ is
perhaps overkill. 

Recent results of Teräväinen \cite{Teravainen:2016} (improving on
earlier work by several authors) show that almost all intervals of
the form $[x,x+\log^{3.51}x]$ contain a number that is the product
of exactly two primes. Indeed, the more technical result \cite[Theorem 5]{Teravainen:2016}
shows that one of these primes must be at least $x/\log^{3}x$. Slightly
smaller primes (though still larger than $\sqrt{2x}$) on somewhat
shorter intervals may be produced from Theorem 4 in the same paper.
More recent related work of Matomäki \cite{Matomaki:2022} shows that
almost all intervals of length $\log^{1+\epsilon}x$ have either a
prime or a product of two primes.

\subsection{A heuristic for transitivity}

Since integers $n$ that are $\log^{1+\epsilon}n$-smooth have asymptotic
density $0$, we can with asymptotic density $1$ find $r$ that is
at least about $\sqrt{n}$ with $\lfloor\frac{n-3}{r}\rfloor\cdot r+p^{a}>n$,
where $p^{a}$ is the largest prime-power divisor of $n$. Indeed,
the $k=2$ and $k=1$ cases that we avoid discussing in Theorem~\ref{thm:LiebeckSaxlPrim}
are handled in \cite{Liebeck/Saxl:1985b}, and can occur only with
asymptotic density $0$, so may be ignored for the purpose of asymptotic
density arguments. 

We assume we can find such a prime $r$. Frequently, we will have
$\lfloor\frac{n-3}{r}\rfloor=1$, and then the desired invariable
generation holds by Lemma~\ref{lem:GoodSieve}. 

Otherwise, the main thing that can go wrong is for $n$ to admit an
integer partition into two parts $n_{1}+n_{2}$ so that $n_{1}=a\cdot r+b$
for $b<p^{a}$, and so that the base-$p$ representation of $n_{1}$
has every digit smaller than the corresponding digit of the base-$p$
representation of $n$. As seen in Example~\ref{exa:31416} and in
Section~\ref{subsec:BadN}, this indeed can happen for certain $p$.
While we do not have a sketch that we know how to make precise, we
know that the condition of Lemma~\ref{lem:GoodSieve} will fail only
for highly power-smooth $n$. In this situation, the integer $n$
will be divisible by several prime-powers of about the same size.
Heuristically, changing the prime slightly will tend to alter the
digits in the base-$p$ expression greatly.

The example of Section~\ref{subsec:BadN} shows the extent to which
this heuristic may fail.

\bibliographystyle{1_Users_russw_Documents_Research_mypapers_Invariable_generation_of_alternating_groups_hamsplain}
\bibliography{0_Users_russw_Documents_Research_mypapers_Invariable_generation_of_alternating_groups_Master}

\end{document}